\documentclass[a4paper,12pt]{amsart}
\usepackage{hyperref,fancyhdr,mathrsfs,amsmath,amscd,amsthm,amsfonts,latexsym,amssymb,stmaryrd}
\usepackage[all]{xy}

\voffset 3mm
\topmargin 10mm
\evensidemargin  5mm
\oddsidemargin  5mm
\textwidth  145mm
\textheight 205mm
\headsep 5mm
\marginparsep 2mm
\marginparwidth 20mm
\footskip 0mm
\headheight 5mm

\linespread{1.1}

\pagestyle{fancy}
\lhead[{\footnotesize \thepage}]{\footnotesize ON THE REAL SUBSPACES OF A QUATERNIONIC VECTOR SPACE}
\chead[]{}
\rhead[\footnotesize Radu Pantilie]{\footnotesize \thepage}
\lfoot{}
\cfoot{}
\rfoot{}

\newcommand{\F}{\mathscr{F}}

\def \a{\alpha}

\def \phi{\varphi}
\def \Phi{\varPhi}
\def \p{\pi}
\def \r{\rho}
\def \s{\sigma}

\def \R{\mathbb{R}}
\def \Hq{\mathbb{H}\,}
\def \C{\mathbb{C}\,}

\def\widecheckg{g^{\hspace*{-2.5pt}\vbox to 5pt{\hbox to
0pt{\LARGE$\check{}$}}}\hspace*{2pt}}

\def\widecheckl{\lambda^{\hspace*{-3.5pt}\vbox to 8pt{\hbox to
0pt{\LARGE$\check{}$}}}\hspace*{2pt}}

\hyphenation{pro-duct}

\begin{document}

\title{On the classification of the real vector subspaces of a quaternionic vector space}
\author{Radu Pantilie}
\email{\href{mailto:radu.pantilie@imar.ro}{radu.pantilie@imar.ro}}
\address{R.~Pantilie, Institutul de Matematic\u a ``Simion~Stoilow'' al Academiei Rom\^ane,
C.P. 1-764, 014700, Bucure\c sti, Rom\^ania}
\subjclass[2010]{18F20, 53C26, 53C28, 54B40}

\newtheorem{thm}{Theorem}[section]
\newtheorem{lem}[thm]{Lemma}
\newtheorem{cor}[thm]{Corollary}
\newtheorem{prop}[thm]{Proposition}

\theoremstyle{definition}

\newtheorem{defn}[thm]{Definition}
\newtheorem{rem}[thm]{Remark}
\newtheorem{exm}[thm]{Example}

\numberwithin{equation}{section}

\maketitle
\thispagestyle{empty}
\section*{Abstract}
\begin{quote}
{\footnotesize
We prove the classification of the real vector subspaces of a quaternionic vector space by using a covariant functor 
which, to any pair formed of a quaternionic vector space and a real subspace, associates a coherent sheaf over the sphere.}
\end{quote}

\section*{Introduction}

\indent
Let $X_E$ be the space of (real) vector subspaces of a vector space $E$. Then $X_E$ is a disjoint union of Grassmannians and
${\rm GL}(E)$ acts transitively on each of its components.\\
\indent
If $E$ is endowed with a linear geometric structure, corresponding to the Lie subgroup $G\subseteq{\rm GL}(E)$\,,
then it is natural to ask whether or not the action induced by $G$ on $X_E$ is still transitive on each component and, if not,
to find explicit representatives for each orbit.\\
\indent
For example, if $E$ is an Euclidean vector space and, accordingly, $G$ is the orthogonal group then the orthonormalization process
shows that $G$ acts transitively on each component of $X_E$\,.\\
\indent
Suppose, instead, that $E$ is endowed with a linear complex structure $J$\,; equivalently, $E=\C^{\!k}$ and $G={\rm GL}(k,\C)$\,.
Then for any vector subspace $U$ of $E$ we have a decomposition $U=F\times V$, where $F$ is a complex vector subspace of $E$ and $V$ is totally
real (that is, $V\cap JV=0$); obviously, the filtration $0\subseteq F\subseteq U$ is canonical. Consequently, the subspaces
$\C^{\!m}\times\R^{l}$, where $2m+l\leq 2k$\,, are representatives for each of the orbits of ${\rm GL}(k,\C)$ on $X_{\C^{\!k}}$.\\
\indent 
The corresponding decomposition for the real subspaces of a hypercomplex vector space - that is, $E=\Hq^{\!k}$ and $G={\rm GL}(k,\Hq)$ - 
was obtained in \cite{DlaRin}\,.\\ 
\indent 
By using a different method, we obtain the decomposition and the canonical filtration
for the real subspaces of a quaternionic vector space; that is, $E=\Hq^{\!k}$ and $G={\rm Sp}(1)\cdot{\rm GL}(k,\Hq)$\,. 
This involves a covariant functor from the category of pairs $(U,E)$\,, where $E$ is a quaternionic vector space
and $U\subseteq E$ is a real vector subspace (with the obvious morphisms induced by the linear quaternionic maps),
to the category of coherent sheaves on the Riemann sphere. We mention that a similar functor appeared
in \cite{Qui-QJM98} (see \cite{Wid-QJM}\,).\\
\indent
I am grateful to Stefano Marchiafava for useful discussions and comments, and to Anton Galaev for informing me about \cite{DlaRin}\,.

\section{Complex and (co-)CR vector spaces} \label{section:complex_(co-)cr_vector_spaces}

\indent
A \emph{linear complex structure} on a (real) vector space $U$ is a linear map $J:U\to U$ such that $J^2=-{\rm Id}_U$.
Then, on associating to any linear complex structure the $-{\rm i}$ eigenspace of its complexification, we obtain a (bijective)
correspondence between the space of linear complex structures on $U$ and the space of complex vector subspaces $C$ of $U^{\C}$
such that $C\oplus\overline{C}=U^{\C}$.\\
\indent
This suggests to consider the following two less restrictive conditions for a complex vector subspace $C$ of $U^{\C}$:\\
\indent
\quad1) $C\cap\overline{C}=0$\,,\\
\indent
\quad2) $C+\overline{C}=U^{\C}$.\\
\indent
Furthermore, conditions (1) and (2) are dual to each other. That is, $C\subseteq U^{\C}$ satisfies condition (1) if and only if
${\rm Ann}\,C\subseteq\bigl(U^{\C}\bigr)^*$ satisfies (2)\,, where ${\rm Ann}\,C=\bigl\{\a\in\bigl(U^{\C}\bigr)^*\,|\,\a|_C=0\bigr\}$
is \emph{the annihilator} of $C$.\\
\indent
Now, it is a standard fact that if $C\subseteq U^{\C}$ satisfies (1) then it is called a \emph{linear CR structure} on $U$.\\
\indent
Therefore a complex vector subspace $C$ of $U^{\C}$ satisfying $C+\overline{C}=U^{\C}$ is called a \emph{linear co-CR structure}
on $U$ \cite{fq}\,.\\
\indent
Thus, a complex vector subspace of $U^{\C}$ is a linear co-CR structure on $U$ if and only if its annihilator is a linear CR structure
on $U^*$.\\
\indent
A vector space endowed with a linear (co-)CR structure is a \emph{(co-)CR vector space}.\\
\indent
If $U$ is a vector subspace of a vector space $E$, endowed with a linear complex structure $J$, then $C=U^{\C}\cap E^J$
is a linear CR structure on $U$, where $E^J$ is the $-{\rm i}$ eigenspace of $J$. Moreover,
if we further assume $U+JU=E$ then $(E,J)$ is, up to complex linear isomorphisms,
the unique complex vector space, containing $U$, such that $C=U^{\C}\cap E^J$.\\
\indent
Thus, we have the following fact.

\begin{prop}[see \cite{fq}\,]
Any CR vector space corresponds to a pair $(U,E)$\,, where $(E,J)$ is a complex vector space and $U$ is a vector subspace of $E$
such that $U+JU=E$.
\end{prop}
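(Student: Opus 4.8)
The plan is to prove the non-trivial direction, existence, since the paragraph preceding the statement already supplies both the assignment $(U,E)\mapsto C=U^{\C}\cap E^J$ and its uniqueness up to complex linear isomorphism. Thus it remains to show that every CR vector space arises in this way: given a real vector space $U$ together with a linear CR structure $C\subseteq U^{\C}$ (so $C\cap\overline C=0$), I would construct a complex vector space $(E,J)$ containing $U$ with $U+JU=E$ and $C=U^{\C}\cap E^J$.

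First I would set $E:=U^{\C}/C$; since $C$ is a complex subspace it is invariant under multiplication by ${\rm i}$, so this operation descends to a complex structure $J$ on the quotient, and I let $\pi\colon U^{\C}\to E$ be the (complex linear) projection. Composing the real inclusion $U\hookrightarrow U^{\C}$ with $\pi$ gives a real linear map $\iota=\pi|_U\colon U\to E$. I would check that $\iota$ is injective: if $\iota(u)=0$ then $u\in C$, and since $u$ is real, $u=\overline u\in\overline C$, whence $u\in C\cap\overline C=0$; this is the only place the CR condition is used. Next, because $U$ spans $U^{\C}$ over $\C$, its image $\iota(U)$ spans $E=U^{\C}/C$ over $\C$, i.e.\ $\iota(U)+J\,\iota(U)=E$; identifying $U$ with $\iota(U)$, this is exactly $U+JU=E$.

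It then remains to verify $C=U^{\C}\cap E^J$. Here one must carefully distinguish two complex structures: the scalar ${\rm i}$ defining $U^{\C}$ (with respect to which $\pi$ is linear with kernel $C$) and the complex structure $J$ on $E$, whose complexification has $E^J$ as $-{\rm i}$ eigenspace inside $E^{\C}$. Complexifying $\iota$ gives an injective complex linear map $\iota^{\C}\colon U^{\C}\to E^{\C}$, through which I identify $U^{\C}$ with a subspace of $E^{\C}$; note that $\iota^{\C}$ and $\pi$ have the same domain but are genuinely different maps, agreeing only on the real points $U$. Writing a general $v=u_1+{\rm i}\,u_2$ with $u_1,u_2\in U$ and computing, $\iota^{\C}(v)$ lies in the $-{\rm i}$ eigenspace $E^J$ precisely when $u_2-{\rm i}\,u_1\in\ker\pi=C$, which, as $C$ is a complex subspace, is equivalent to $v={\rm i}(u_2-{\rm i}\,u_1)\in C$. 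Hence $U^{\C}\cap E^J=\iota^{\C}(C)$, that is, $C=U^{\C}\cap E^J$ under the identification.

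Finally, I would invoke the uniqueness already recorded before the statement to conclude that $(U,E)$ is determined up to complex linear isomorphism, so that the assignment is a genuine correspondence. The main obstacle is this eigenspace step: not the construction of $E$, which is forced, but the bookkeeping of the two distinct complex structures and the verification that the eigenvalue condition selects exactly $C$ and not $\overline C$ — the sign that pins down the quotient by $C$ rather than by $\overline C$.
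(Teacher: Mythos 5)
Your proof is correct, and the point you single out as delicate is handled properly: with $E^J=\{e+{\rm i}\,Je : e\in E\}$ the $-{\rm i}$ eigenspace, the equivalence $u_2-{\rm i}\,u_1\in C\iff v={\rm i}\,(u_2-{\rm i}\,u_1)\in C$ does pin down $C$ rather than $\overline{C}$, and injectivity of $\iota$ is exactly where the CR condition $C\cap\overline{C}=0$ enters. It is worth noting how this sits relative to the paper: the paper offers no proof of this proposition at all (it is stated with a citation to \cite{fq}), and the preceding paragraph supplies only the direction from a pair to a CR structure, plus the uniqueness of the ambient $(E,J)$ when $U+JU=E$. What the paper does prove is the dual proposition for co-CR spaces, by quotienting the ambient complex space by a totally real subspace $V$ and pushing $E^J$ forward to $E/V$; your construction $E:=U^{\C}/C$ is precisely the mirror of that quotient argument on the CR side. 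An alternative, arguably the route the paper intends, is to deduce the CR statement from the co-CR one via the annihilator duality it sets up ($C$ is a linear CR structure on $U$ if and only if ${\rm Ann}\,C$ is a linear co-CR structure on $U^*$), which is shorter once the co-CR proposition is in hand. Your direct route costs the two-complex-structures bookkeeping you describe, but buys an explicit and canonical (indeed functorial) model of the ambient space and makes the eigenvalue conventions transparent, so it stands as a complete and self-contained proof.
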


\indent
We, also, have the following dual fact.

\begin{prop}[\,\cite{fq}\,]
Any co-CR vector space corresponds to a and pair $(V,E)$\,, where $(E,J)$ is a complex vector space and
$V$ is a vector subspace of $E$ such that $V\cap JV=0$\,.
\end{prop}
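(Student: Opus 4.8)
The plan is to obtain this statement as the exact dual of the preceding Proposition (the CR case), via the annihilator duality between co-CR and CR structures recorded above. Let $W$ be a co-CR vector space, with structure $C\subseteq W^{\C}$ satisfying $C+\overline{C}=W^{\C}$. Since annihilation turns sums into intersections and commutes with conjugation, ${\rm Ann}\,C\cap\overline{{\rm Ann}\,C}={\rm Ann}\bigl(C+\overline{C}\bigr)=0$, so ${\rm Ann}\,C\subseteq\bigl(W^{\C}\bigr)^*=\bigl(W^*\bigr)^{\C}$ is a linear CR structure on $W^*$. I would then feed the pair $\bigl(W^*,{\rm Ann}\,C\bigr)$ into the preceding Proposition, obtaining a complex vector space $(E_0,J_0)$ together with an inclusion $W^*\subseteq E_0$ for which $W^*+J_0\,W^*=E_0$ and ${\rm Ann}\,C=\bigl(W^*\bigr)^{\C}\cap E_0^{J_0}$.

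Next I would dualise this data: put $E:=E_0^*$, endow it with the complex structure $J:=-J_0^*$, and set $V:={\rm Ann}(W^*)\subseteq E$. Dualising the inclusion $W^*\subseteq E_0$ gives a surjection $E=E_0^*\twoheadrightarrow\bigl(W^*\bigr)^*=W$ with kernel $V$, so that $W\cong E/V$. The spanning condition on the CR side becomes the totally real condition sought here: applying ${\rm Ann}$ to $W^*+J_0\,W^*=E_0$ yields ${\rm Ann}(W^*)\cap{\rm Ann}(J_0\,W^*)=0$, and since ${\rm Ann}(J_0\,W^*)=JV$, this is exactly $V\cap JV=0$. Thus $(E,J)$ is a complex vector space and $V\subseteq E$ is totally real, the underlying co-CR vector space being recovered as the quotient $E/V$.

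With the chosen sign, the $-{\rm i}$-eigenspace satisfies $E^{J}={\rm Ann}\bigl(E_0^{J_0}\bigr)$. It then remains to check that the co-CR structure naturally carried by $(V,E)$, namely $p^{\C}\bigl(E^{J}\bigr)$ with $p\colon E\to E/V$ the projection, coincides with $C$ under $W\cong E/V$. That $p^{\C}\bigl(E^{J}\bigr)$ is a co-CR structure is automatic, its sum with the conjugate being $p^{\C}\bigl(E^{J}+\overline{E^{J}}\bigr)=p^{\C}\bigl(E^{\C}\bigr)=(E/V)^{\C}$. The identification with $C$ is the real content, and the step I expect to be the main obstacle: writing $C={\rm Ann}\,{\rm Ann}\,C$ and annihilating ${\rm Ann}\,C=\bigl(W^*\bigr)^{\C}\cap E_0^{J_0}$ inside $E^{\C}=\bigl(E_0^{\C}\bigr)^*$, one uses ${\rm Ann}(A\cap B)={\rm Ann}\,A+{\rm Ann}\,B$ to get $V^{\C}+{\rm Ann}\bigl(E_0^{J_0}\bigr)=V^{\C}+E^{J}$, and pushing this down along $p^{\C}$ (whose kernel is $V^{\C}$) returns $p^{\C}\bigl(E^{J}\bigr)$, as wanted. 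The delicate point throughout is the eigenspace–annihilator bookkeeping under the transpose, which is exactly why the sign $J:=-J_0^*$ is forced: the opposite sign would return $\overline{C}$ in place of $C$.

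Finally, the correspondence is bijective because it composes three bijections: the involution $C\mapsto{\rm Ann}\,C$ between co-CR and CR structures, the preceding Proposition, and the involution $(U,E_0)\mapsto\bigl({\rm Ann}(U),E_0^*\bigr)$ on pairs, which trades $U+J_0\,U=E_0$ for $V\cap JV=0$. As a check free of duality, one may instead take $E:=C$ with its intrinsic complex structure and let $p:={\rm Re}\colon C\to W$ be the restriction of the real-part projection $W^{\C}=W\oplus{\rm i}W\to W$; a direct computation shows $p$ is onto with kernel $V=C\cap{\rm i}W$ satisfying $V\cap{\rm i}V=0$, exhibiting the same pair $(V,E)$.
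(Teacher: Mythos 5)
Your main argument is correct, and it is essentially the route the paper intends: the Proposition is introduced as ``the following dual fact'', and the paper's own proof is only a two-line sketch in the opposite direction --- from a totally real pair $(V,E)$ it forms $U=E/V$ and notes that $\pi(E^J)$ is a linear co-CR structure, leaving everything else to ``the proof follows quickly''. You supply precisely what is left implicit: feeding ${\rm Ann}\,C$ into the preceding (CR) proposition, dualising with the sign $J=-J_0^*$ (which matches the paper's own convention, recorded in Section 2, that the dual structure is the opposite of the transpose and that $(E^*)^{J^*}={\rm Ann}\bigl(E^J\bigr)$), and verifying via ${\rm Ann}_{E^{\C}}({\rm Ann}\,C)=\bigl(p^{\C}\bigr)^{-1}(C)=V^{\C}+E^{J}$ that the pair so obtained induces the given structure $C$ rather than $\overline{C}$. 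The annihilator bookkeeping (${\rm Ann}(A\cap B)={\rm Ann}\,A+{\rm Ann}\,B$, ${\rm Ann}(J_0W^*)=JV$, the kernel of $E_0^*\to (W^*)^*$ being ${\rm Ann}(W^*)$) is valid in the finite-dimensional setting of the paper, so the existence direction, which the paper does not write out, is fully established, and your bijectivity remark is at the same level of rigor as the paper's.

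One slip, though it sits only in your optional ``check free of duality'': with the paper's convention that $E^J$ is the $-{\rm i}$-eigenspace, taking $E:=C$ with $J$ given by multiplication by ${\rm i}$ and $p:={\rm Re}|_C$ recovers $\overline{C}$, not $C$. Indeed $E^J=\bigl\{\,c\otimes1+({\rm i}c)\otimes{\rm i}\;:\;c\in C\,\bigr\}$, and writing $c=a+{\rm i}b$ with $a,b\in W$ gives ${\rm Re}(c)=a$ and ${\rm Re}({\rm i}c)=-b$, hence $p^{\C}\bigl(E^J\bigr)=\{a-{\rm i}b\}=\overline{C}$. The repair is immediate --- take $E:=\overline{C}$, or equivalently equip $C$ with multiplication by $-{\rm i}$ --- and, fittingly, this is the very conjugation trap you flagged when forcing the sign $J=-J_0^*$ in the main argument; the surjectivity of ${\rm Re}|_C$ and the computations $\ker\bigl({\rm Re}|_C\bigr)=C\cap{\rm i}W$ and $V\cap{\rm i}V=0$ there are correct. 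Since this paragraph is an independent cross-check and not part of the proof proper, it does not affect the validity of your proposal.
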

\begin{proof}
Let $(E,J)$ be a complex vector space and let $V\subseteq E$ be totally real; that is, $V\cap JV=0$\,.
Let $U=E/V$ and let $\p:E\to U$ be the projection. Then $\p(E^J)$ is a linear co-CR structure on $U$
and the proof follows quickly.
\end{proof}

\indent
Let $(E,J)$ be a complex vector space and let $U$ be a vector subspace of $E$. Then, obviously, $F=U\cap JU$ is invariant under $J$
and therefore $(F,J|_F)$ is a complex vector subspace of $(E,J)$\,. Moreover, $(F,J|_F)$ is the biggest complex vector subspace of $(E,J)$
contained by $U$. Consequently, if $V$ is a complement of $F$ in $U$ then $V$ is totally real in $E$.\\
\indent
Thus, we have a decomposition $U=F\oplus V$; moreover, the filtration $0\subseteq F\subseteq U$ is canonical.\\
\indent
As already suggested, it is useful to consider pairs $(U,E)$\,, with $E$ a complex vector space and
$U$ a vector subspace of $E$. A morphism $t:(U,E)\to(U',E')$\,, between two such pairs, is a complex linear map $t:E\to E'$
such that $t(U)\subseteq U'$. Also, there is an obvious notion of product: $(U,E)\times(U',E')=(U\times U',E\times E')$\,.

\begin{prop}[see \cite{DlaRin}\,] \label{prop:complex_decomposition}
Any pair formed of a complex vector space and a real vector subspace admits a decomposition, unique up to the order of factors,
as a (finite) product in which each factor is either $(\C,\C)$\,, $(\R,\C)$\,, or $(0,\C)$\,.
\end{prop}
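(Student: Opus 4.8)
The plan is to refine the canonical decomposition $U=F\oplus V$ (with $F=U\cap JU$ and $V$ a totally real complement of $F$ in $U$) into an internal direct sum of $E$ into three blocks, and then to split each block into one‑dimensional pieces. Concretely, I would first adjoin to $F$ and $V$ a complement of the smallest complex subspace containing $U$: set $W=U+JU$, which is $J$‑invariant, and choose a complex vector subspace $E'$ with $E=W\oplus E'$. The claim is then that
\begin{equation*}
E=F\oplus(V\oplus JV)\oplus E'
\end{equation*}
as complex vector spaces, and that this splitting is \emph{compatible} with $U$, in the sense that $U$ is the direct sum of its intersections with the three summands. Granting this, I would finish by choosing a complex basis of $F$, a real basis of $V$, and a complex basis of $E'$: the complex lines spanned by the basis of $F$ give factors isomorphic to $(\C,\C)$; the complex lines $\C v_j$ spanned by a real basis $v_1,\dots,v_p$ of $V$ give factors $(\R v_j,\C v_j)\cong(\R,\C)$, using total reality to see that $\{v_j,Jv_j\}$ is a real basis of $V\oplus JV$ and that $\C v_j\cap V=\R v_j$; and the complex lines in $E'$ give factors $(0,\C)$.

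The heart of the argument, and the step I expect to require the most care, is verifying that the above decomposition is genuinely a product of pairs, i.e. that $V\oplus JV$ and $E'$ do not meet $U$ in more than $V$ and $0$ respectively. That $U\cap E'=0$ is immediate, since $U\subseteq W$ and $W\cap E'=0$. For the middle summand I would argue that if $x\in U\cap(V\oplus JV)$, writing $x=v_1+Jv_2$ with $v_1,v_2\in V$, then $Jv_2=x-v_1\in U$; since also $v_2\in V\subseteq U$, this forces $v_2\in U\cap JU=F$, whence $v_2\in V\cap F=0$ and $x=v_1\in V$. The same computation (now with $x\in F$) shows $F\cap(V\oplus JV)=0$, and then a dimension count, using $\dim_\R(U+JU)=2\dim_\R U-\dim_\R F$ together with $\dim_\R(V\oplus JV)=2\dim_\R V$, confirms that $F\oplus(V\oplus JV)$ exhausts $W$, so the three summands indeed fill $E$.

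For uniqueness up to order, the plan is to exhibit a complete set of numerical invariants that are additive over products. For any pair $(U,E)$ the three quantities $\dim_\C(U\cap JU)$, $\dim_\R U$ and $\dim_\C E$ are clearly invariant under isomorphism and additive over products, because both the complex structure and the subspace split over a product. Evaluating them on the three model factors shows that, in any decomposition with $a$ copies of $(\C,\C)$, $b$ copies of $(\R,\C)$ and $c$ copies of $(0,\C)$, one has $\dim_\C(U\cap JU)=a$, $\dim_\R U=2a+b$ and $\dim_\C E=a+b+c$. These relations determine $a$, $b$ and $c$ uniquely, so the multiset of factors is forced, completing the proof.
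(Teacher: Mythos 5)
Your proof is correct and follows essentially the same route as the paper: the decomposition $E=F\oplus(V\oplus JV)\oplus E'$ with $F=U\cap JU$ and $V$ a totally real complement, yielding $(U,E)\cong(F,F)\times(V,V+JV)\times(0,E')$, and then a splitting of each block into complex lines. You go beyond the paper in two welcome respects---verifying explicitly that $U$ is compatible with the three summands (which the paper dismisses with ``obviously'') and supplying the uniqueness argument via the additive invariants $\dim_{\C}(U\cap JU)$, $\dim_{\R}U$, $\dim_{\C}E$, which the paper's proof omits entirely---with only the trivial slip that from $Jv_2\in U\cap JU=F$ you should invoke the $J$-invariance of $F$ to conclude $v_2\in F$.
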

\begin{proof}
Let $(E,J)$ be a complex vector space and let $U$ be a vector subspace of $E$. We have seen that $U=F\times V$, where
$F=U\cap JU$ and $V$ is a complement of $F$ in $U$\,. {}From the fact that $V\cap JV=0$ it follows that $F\cap(V+JV)=0$\,.\\
\indent
Let $E'\subseteq E$ be a complex vector subspace complementary to $F\oplus(V+JV)$\,. We, obviously, have that
$(U,E)$ is isomorphic to $(F,F)\times(V,V+JV)\times(0,E')$\,.\\
\indent
To complete the proof, just note that $(F,F)$\,, $(V,V+JV)$\,, and $(0,E')$ decompose as products in which each factor is of the
form $(\C,\C)$\,, $(\R,\C)$\,, and $(0,\C)$\,, respectively.
\end{proof}

\indent
If we apply Proposition \ref{prop:complex_decomposition} to the pair corresponding to a (co-)CR vector space then we obtain the
following facts, dual to each other:\\
\indent
\quad1) The pair corresponding to a CR vector space admits a decomposition, unique up to the order of factors,
as a product in which each factor is either $(\C,\C)$ or $(\R,\C)$\,;\\
\indent
\quad2) The pair corresponding to a co-CR vector space admits a decomposition, unique up to the order of factors,
as a product in which each factor is either $(\R,\C)$ or $(0,\C)$\,.\\
\indent
Thus, we have the following result.

\begin{cor}
Any pair formed of a complex vector space and a real vector subspace admits a decomposition as a product
of the pair corresponding to a CR vector space and the pair corresponding to a co-CR vector space.
\end{cor}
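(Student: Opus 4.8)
The plan is to read off the statement directly from Proposition~\ref{prop:complex_decomposition}, by regrouping the elementary factors according to whether they contribute to the CR or to the co-CR part.

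First I would apply Proposition~\ref{prop:complex_decomposition} to the given pair $(U,E)$, obtaining an isomorphism with a finite product in which each factor is $(\C,\C)$, $(\R,\C)$, or $(0,\C)$. Collecting like factors, this may be written
\[
(U,E)\cong(\C,\C)^a\times(\R,\C)^b\times(0,\C)^c ,
\]
where the exponents denote multiple products. The crucial point is the one already implicit in the two dual facts recalled immediately before the statement: the factors $(\C,\C)$ and $(\R,\C)$ are precisely the building blocks of CR pairs, while $(\R,\C)$ and $(0,\C)$ are precisely those of co-CR pairs, the type $(\R,\C)$ being common to both.

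Next I would record the converse, namely that arbitrary products of these building blocks are again of the expected type. Both $(\C,\C)$ and $(\R,\C)$ satisfy $U+JU=E$, so each is the pair of a CR vector space; and since $\bigl(U_1\times U_2\bigr)+J\bigl(U_1\times U_2\bigr)=(U_1+JU_1)\times(U_2+JU_2)$, the class of such pairs is closed under products. Dually, both $(\R,\C)$ and $(0,\C)$ are totally real, and total reality is inherited by products, so any product of these is the pair of a co-CR vector space.

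Finally, I would group all the $(\C,\C)$ and $(\R,\C)$ factors on one side and the $(0,\C)$ factors on the other, obtaining
\[
(U,E)\cong\bigl[(\C,\C)^a\times(\R,\C)^b\bigr]\times(0,\C)^c ,
\]
the first bracket being a CR pair and the second a co-CR pair. This is the asserted decomposition. I do not anticipate any genuine obstacle; the only subtlety worth flagging is that the splitting is not unique, since each $(\R,\C)$ factor may equally be placed in the co-CR part, which is precisely why the corollary asserts only the existence of such a product decomposition.
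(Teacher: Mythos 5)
Your proposal is correct and matches the paper's intended argument: the paper states this corollary without a written proof, deriving it exactly as you do from Proposition~\ref{prop:complex_decomposition} and the two dual facts preceding the statement (the factors $(\C,\C)$, $(\R,\C)$ build CR pairs, while $(\R,\C)$, $(0,\C)$ build co-CR pairs), with the grouping of factors giving the decomposition. Your added remarks on closure under products and on the non-uniqueness caused by the ambivalent $(\R,\C)$ factors are accurate and consistent with the corollary asserting only existence.
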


\section{Quaternionic vector spaces}

\indent
The automorphism group of the (unital) associative algebra of quaternions is ${\rm SO}(3,\R)$\,, acting trivially on $\R$ and canonically
on ${\rm Im}\Hq(=\R^3)$\,. Thus, if $E$ is a vector space then there exists a natural action of ${\rm SO}(3,\R)$
on the space of morphisms of associative algebras from $\Hq$ to ${\rm End}(E)$\,; that is, on the space
of \emph{linear hypercomplex structures} on $E$. The (nonempty) orbits of this action are the \emph{linear quaternionic structures} on~$E$.\\
\indent
A \emph{quaternionic (hypercomplex) vector space} is a vector space endowed with a linear quaternionic (hypercomplex) structure
(see \cite{AleMar-Annali96}\,,\,\cite{IMOP}\,).\\
\indent
Let $E$ be a quaternionic vector space and let $\r:\Hq\to{\rm End}(E)$ be a representative of its linear quaternionic structure.
Then, obviously, the space $Z=\rho(S^2)$ of \emph{admissible linear complex structures} on $E$ depends only of the linear quaternionic
structure of $E$. We denote by $E^J$ the $-{\rm i}$ eigenspace of $J\in Z$.\\
\indent
The linear quaternionic structure on $E$ corresponds to a linear quaternionic structure on its dual $E^*$
given by the morphism of associative algebras from $\Hq$ to ${\rm End}(E^*)$\,, which maps any $q\in \Hq$ to the transpose of $\rho(\overline{q})$\,.
Thus, any admissible linear complex structure $J$ on $E$ corresponds to the admissible linear complex structure $J^*$ which
is the opposite of the transpose of $J$; note that, $(E^*)^{J^*}$ is the annihilator of $E^J$.\\
\indent
Let $E$ and $E'$ be quaternionic vector spaces and let $Z$ and $Z'$ be the corresponding spaces of admissible linear complex structures,
respectively. A \emph{linear quaternionic map} from $E$ to $E'$ is a linear map $t:E\to E'$ such that, for some function
$T:Z\to Z'$, we have $t\circ J=T(J)\circ t$\,, for any $J\in Z$; consequently, if $t\neq0$ then $T$ is unique
and an orientation preserving isometry (see \cite{IMOP}\,).\\
\indent
The (left) $\Hq$-module structure on $\Hq^{\!k}$ determines a linear quaternionic structure on it. Moreover, for any quaternionic
vector space $E$, with $\dim E=4k$\,, there exists a linear quaternionic isomorphism from $E$ onto $\Hq^{\!k}$. The group of linear quaternionic
automorphisms of $\Hq^{\!k}$ is ${\rm Sp}(1)\cdot{\rm GL}(k,\Hq)$\,, acting on $\Hq^{\!k}$ by $\bigl(\pm(a,A),q\bigr)\mapsto aqA^{-1}$,
for any $\pm(a,A)\in{\rm Sp}(1)\cdot{\rm GL}(k,\Hq)$ and $q\in\Hq^{\!k}$ (see \cite{IMOP}\,).\\
\indent
We end this section by showing how to define the product of two quaternionic vector spaces $E$ and $E'$.
Let $T:Z\to Z'$ be an orientation preserving isometry between the spaces of admissible linear complex structures on $E$ and $E'$.\\
\indent
If $\rho:\Hq\to{\rm End}(E)$ represents the linear quaternionic structure of $E$ then
$T$ is the restriction of a unique linear map $\widetilde{T}:\r(\Hq)\to{\rm End}(E')$ such that
$\widetilde{T}\circ\rho$ determines the linear quaternionic structure on $E'$.\\
\indent
Then $q\mapsto\bigl(\rho(q),\widetilde{T}(\rho(q))\bigr)$\,, $(q\in\Hq)$\,, defines the \emph{product linear quaternionic structure}
on $E\times E'$ (with respect to $T$).\\
\indent
Note that, although the product of two quaternionic vector spaces is well-defined (that is, it doesn't depend on the particular
isometry $T$), it doesn't make the category of quaternionic vector spaces Abelian. Nevertheless, it is obvious that
the category of hypercomplex vector spaces is Abelian.

\section{Pairs formed of a quaternionic vector space\\
and a real vector subspace}

\indent
The category of quaternionic vector spaces is a full subcategory of the category whose objects are pairs
$(U,E)$\,, where $E$ is a quaternionic vector space and $U\subseteq E$ is a real vector subspace. The morphisms
between two such pairs $(U,E)$ and $(U',E')$ are the linear quaternionic maps $t:E\to E'$ such that $t(U)\subseteq U'$ (see \cite{DlaRin}\,).\\
\indent
If $U$ is a real vector subspace of a quaternionic vector space $E$ we call $({\rm Ann}\,U,E^*)$ the \emph{dual} of $(U,E)$\,.\\
\indent
We shall see that there are three basic subcategories of the category of pairs formed of a quaternionic vector space and a real vector subspace,
two of which are related to the Twistor Theory (see \cite{fq}\,).

\begin{defn} \label{defn:(co-)cr_q}
Let $E$ be a quaternionic vector space and let $Z$ be its space of admissible linear complex structures.\\
\indent
If $\iota:U\to E$ is an injective linear map then $(E,\iota)$ is a \emph{linear CR quaternionic structure} on $U$ if
${\rm im}\,\iota+J({\rm im}\,\iota)=E$, for any $J\in Z$.\\
\indent
A \emph{CR quaternionic vector space} is a vector space endowed with a linear CR quaternionic structure.
\end{defn}

\indent
By duality, we obtain the notion of \emph{co-CR quaternionic vector space}.\\
\indent
To any co-CR quaternionic vector space $(U,E,\r)$ we associate the pair $({\rm ker}\r,E)$\,. Thus, the category of co-CR quaternionic
vector spaces is a full subcategory of the category of pairs formed of a quaternionic vector space and a real vector subspace;
by duality, the latter, also, includes the category of CR quaternionic vector spaces.\\
\indent
See \cite{fq} for further information on (co-)CR quaternionic vector spaces.

\begin{rem}
1) Let $U$ be a real vector subspace of a quaternionic vector space $E$. Then $(U,E)$ is given by a CR quaternionic vector space
if and only if its dual is given by a co-CR quaternionic vector space.\\
\indent
2) Any quaternionic vector space $E$ is both CR and co-CR quaternionic. When we consider $E$ a CR quaternionic vector space
the associated pair is $(E,E)$\,, whilst when we consider $E$ a co-CR quaternionic vector space the associated pair is $(0,E)$\,.
\end{rem}

\indent
We shall construct a covariant functor from the category of pairs, formed of a quaternionic vector space and a real vector subspace,
to the category of coherent analytic sheaves, over the sphere, endowed with a conjugation covering the antipodal map
(see \cite{GunRos} for the basic properties of coherent analytic sheaves and \cite{Qui-QJM98} for coherent analytic sheaves, over the sphere,
endowed with a conjugation covering the antipodal map, briefly called `$\s$-sheaves').\\
\indent
For this, firstly, note that if $E$ is a quaternionic vector space, with $Z\,(=S^2)$ the space of admissible
linear complex structures, then $E^{0,1}=\bigcup_{J\in Z} \{J\}\times E^J$ is a holomorphic vector subbundle of $Z\times E^{\C}$.
Now, if $U\subseteq E$ is a real vector subspace then the projection $E\to E/U$ induces, by restriction, a morphism of
holomorphic vector bundles $E^{0,1}\to Z\times(E/U)^{\C}$. Let $\mathcal{U}_-$ and $\mathcal{U}_+$ be the kernel and cokernel,
respectively, of this morphism of holomorphic vector bundles.

\begin{defn}
We call $\mathcal{U}=\mathcal{U}_-\oplus\mathcal{U}_+$ \emph{the (coherent analytic) sheaf of $(U,E)$\,}.
\end{defn}

\indent
The proof of the following proposition is straightforward.

\begin{prop} \label{prop:assoc_sheaf_first_props}
The association $(U,E)\mapsto\mathcal{U}$ defines a covariant functor $\F$ from the category of pairs, formed of a quaternionic vector space
$E$ and a real vector subspace $U\subseteq E$, to the category of coherent sheaves, on the sphere, endowed with a conjugation covering
the antipodal map. Furthermore, $\F$ has the following properties:\\
\indent
{\rm (i)} For any morphism $t:(U,E)\to(U',E')$\,, we have that $\F(t)$ maps $\F(U,E)_{\pm}$ to $\F(U',E')_{\pm}$\,.\\
\indent
{\rm (ii)} If $(U,E)$ is given by a (co-)CR quaternionic vector space then $\F(U,E)$ is its holomorphic vector bundle.
\end{prop}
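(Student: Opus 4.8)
The plan is to verify in turn that $\mathcal{U}$ is a coherent $\s$-sheaf, that the association is functorial, and then to read off (i) and (ii) from the construction. Coherence is immediate: the morphism $E^{0,1}\to Z\times(E/U)^{\C}$ is a morphism of holomorphic vector bundles, hence of coherent sheaves, and since coherent analytic sheaves form an Abelian category its kernel $\mathcal{U}_-$ and cokernel $\mathcal{U}_+$ are again coherent. Moreover, as $Z$ is one-dimensional and $\mathcal{U}_-$ is a subsheaf of the locally free sheaf $E^{0,1}$, it is torsion-free and therefore locally free; thus $\mathcal{U}_-$ is always a holomorphic vector bundle, whilst $\mathcal{U}_+$ may have torsion.

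Next I would construct the conjugation. Because $E$ is real, complex conjugation acts on $E^{\C}$ and maps the $-{\rm i}$ eigenspace $E^J$ of $J$ onto the $+{\rm i}$ eigenspace, which is $E^{-J}$; hence it defines an antiholomorphic bundle isomorphism of $E^{0,1}$ covering the antipodal map $J\mapsto-J$ of $Z$. The same conjugation on $Z\times(E/U)^{\C}$ (here $E/U$ is real) is intertwined with the former by the morphism $E^{0,1}\to Z\times(E/U)^{\C}$, the latter being the restriction of the complexification of the real projection $E\to E/U$. Consequently the conjugation descends to $\mathcal{U}_-$ and $\mathcal{U}_+$, covering the antipodal map, so $\mathcal{U}$ is a $\s$-sheaf.

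For functoriality and (i), a morphism $t:(U,E)\to(U',E')$ is a linear quaternionic map, so (for $t\neq0$) it induces an orientation preserving isometry $T:Z\to Z'$ with $t\circ J=T(J)\circ t$; regarding $Z$, $Z'$ as the Riemann sphere, $T$ is a biholomorphism intertwining the two antipodal involutions. Thus $t^{\C}$ carries $E^J$ into $(E')^{T(J)}$ and so induces a morphism $E^{0,1}\to (E')^{0,1}$ covering $T$; since $t(U)\subseteq U'$, this morphism is compatible with the two projections to $(E/U)^{\C}$ and $(E'/U')^{\C}$. By the functoriality of kernel and cokernel, we obtain $\F(t)$ sending $\mathcal{U}_-$ to $\mathcal{U}'_-$ and $\mathcal{U}_+$ to $\mathcal{U}'_+$, which is (i). The one point I expect to need care here is the bookkeeping identifying the various twistor spheres with the fixed sphere through the isometries $T$ (for $t=0$ one takes the zero morphism); everything else is the formal functoriality of kernel and cokernel in the Abelian category of coherent sheaves.

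Finally, for (ii) I would carry out the fibrewise linear algebra. Over $J\in Z$ the morphism is the restriction to $E^J$ of the projection $E^{\C}\to E^{\C}/U^{\C}$, and a direct computation shows it is surjective if and only if $U+JU=E$ and injective if and only if $E^J\cap U^{\C}=0$. If $(U,E)$ comes from a CR quaternionic vector space then $U+JU=E$ for every $J$, so the morphism is fibrewise surjective, whence $\mathcal{U}_+=0$ and $\mathcal{U}=\mathcal{U}_-$ is the kernel bundle, which is exactly the holomorphic vector bundle attached to the CR quaternionic vector space in \cite{fq}; the co-CR case is dual, with the morphism fibrewise injective, $\mathcal{U}_-=0$ and $\mathcal{U}=\mathcal{U}_+$ the cokernel bundle. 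The main obstacle in this last step is matching the resulting bundle with the one from \cite{fq}, but once the pointwise surjectivity and injectivity criteria above are established, the constant rank of the morphism makes $\mathcal{U}$ locally free and the identification is forced.
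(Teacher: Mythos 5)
Your proposal is correct and follows exactly the route the paper has in mind: the paper states only that ``the proof of the following proposition is straightforward,'' and your verification --- coherence of the sheaf kernel and cokernel (with the observation that $\mathcal{U}_-$ is torsion-free, hence locally free, over the curve $Z$), the conjugation induced by $\overline{E^J}=E^{-J}$ covering $J\mapsto -J$, functoriality via the induced isometry $T$ with $t\circ J=T(J)\circ t$, and the fibrewise criteria $E^J+U^{\C}=E^{\C}$ iff $U+JU=E$ and $E^J\cap U^{\C}=0$ for the (co-)CR cases --- is precisely the omitted routine argument. The only mildly hand-waved point, identifying the resulting bundle with the one of \cite{fq} in part (ii), is harmless since the fibres $E^J\cap U^{\C}$ (respectively, the cokernel of $E^J\to(E/U)^{\C}$) are by definition the (co-)CR structures from which that bundle is built.
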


\indent
With the same notations as in Proposition \ref{prop:assoc_sheaf_first_props}\,, if\/ $\mathcal{U}=\mathcal{U}_+$
then $E/U$ is the space of (global) sections of\/ $\mathcal{U}$ intertwining the antipodal map and the conjugation.\\
\indent
Here are the basic examples of pairs whose sheaves are torsion free (cf.\ \cite{DlaRin}\,, \cite{Qui-QJM98}\,; see, also, \cite{Wid-QJM}\,, \cite{fq}\,).

\begin{exm} \label{exm:U_k}
Let $q_1\,,\ldots,q_{k+1}\in S^2$, $(k\geq1)$\,, be such that $q_i\neq\pm q_j$\,, if $i\neq j$\,.
For $j=1,\ldots,k$ let $e_j=(\underbrace{0,\ldots,0}_{j-1},q_j,q_{j+1},\underbrace{0,\ldots,0}_{k-j})$\,.
Denote $U_0=\R$ and, for $k\geq1$\,, let $U_k=\R^{k+1}+\R e_1+\ldots+\R e_k$\,.\\
\indent
Then the sheaf of $(U_k,\Hq^{\!k+1})$ is $\mathcal{O}(2k+2)$\,, for any $k\in\mathbb{N}$.
Note that, the projection $\Hq^{\!k+1}\to\Hq^{\!k+1}/U_k$ defines a co-CR quaternionic vector space
and the sheaf of the dual of $(U_k,\Hq^{\!k+1})$ is $\mathcal{O}(-2k-2)$\,, for any $k\in\mathbb{N}$.
\end{exm}

\begin{exm} \label{exm:V_k}
Let $V_0=\{0\}$ and, for $k\geq1$\,, let $V_k$ be the vector subspace of $\Hq^{\!2k+1}$ formed of all vectors of the form
$$(z_1\,,\overline{z_1}+z_2\,{\rm j}\,, z_3-\overline{z_2}\,{\rm j}\,,\ldots, \overline{z_{2k-1}}+z_{2k}\,{\rm j}\,,-\overline{z_{2k}}\,{\rm j})\;,$$
where $z_1\,,\ldots,z_{2k}$ are complex numbers.\\
\indent
Then the sheaf of $(V_k,\Hq^{\!2k+1})$ is $2\mathcal{O}(2k+1)$\,, for any $k\in\mathbb{N}$.
Note that, the projection $\Hq^{\!2k+1}\to\Hq^{\!2k+1}/V_k$ defines a co-CR quaternionic vector space
and the sheaf of the dual of $(V_k,\Hq^{\!2k+1})$ is $2\mathcal{O}(-2k-1)$\,, for any $k\in\mathbb{N}$.
\end{exm} 

\indent 
The next class of pairs is taken from \cite{DlaRin}\,. 

\begin{exm} \label{exm:W_kq}
For $k\geq1$ and $q\in S^2$\,, let $W_{k,q}$ be the real vector subspace of $\Hq^{\!k}$ formed of all vectors of the form 
\begin{equation} \label{e:DlaRin}
(a_1+b_1q+b_2{\rm i}\,, a_2+b_2q+b_3{\rm i}\,,\ldots,a_{k-1}+b_{k-1}q+b_k{\rm i}\,,a_k+b_kq)\;, 
\end{equation} 
where $a_1,b_1,\ldots,a_k,b_k$ are real numbers, and we have assumed $q\neq\pm{\rm i}$\,; 
if $q=\pm{\rm i}$ then we replace ${\rm i}$ by ${\rm j}$ in \eqref{e:DlaRin}\,.\\ 
\indent 
Then for any $p\in S^2\setminus\{\pm q\}$ we have $W_{k,q}\cap p\,W_{k,q}=0$\,, whilst $W_{k,q}\cap q\,W_{k,q}$ has dimension two. 
Together with \cite[Proposition 3.1]{Qui-QJM98} this implies that the sheaf of $(W_{k,q},\Hq^{\!k})$ is the indecomposable torsion sheaf 
with conjugation, supported at $\pm q$\,, and of Chern number $2k$\,. 
\end{exm}

\section{The main results}

\indent
Now, we can prove the following:

\begin{thm}[cf.\ \cite{DlaRin}\,] \label{thm:qv_main}
Any pair formed of a quaternionic vector space and a real vector subspace admits a decomposition, unique up to the order of factors,
as a (finite) product in which each factor is given by one of the Examples \ref{exm:U_k}\,, \ref{exm:V_k} or \ref{exm:W_kq}\,,
or is the dual of one of the Examples \ref{exm:U_k} or \ref{exm:V_k}\,.
\end{thm}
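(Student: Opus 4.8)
The plan is to prove the theorem by transporting it, through the functor $\F$ of Proposition~\ref{prop:assoc_sheaf_first_props}, into a statement about coherent $\s$-sheaves on the sphere. The one formal property I need at the outset is that $\F$ is \emph{additive}: it carries a product of pairs to the direct sum of the associated sheaves, so that $\F(U,E)_{\pm}$ depend additively on $(U,E)$. This is immediate from the construction of $\mathcal{U}_-$ and $\mathcal{U}_+$ as a fibrewise kernel and cokernel. It reduces the problem to understanding how the sheaf $\mathcal{U}=\mathcal{U}_-\oplus\mathcal{U}_+$ decomposes.

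Next I would determine precisely which indecomposable $\s$-sheaves can occur, and that they decompose uniquely. On the sphere every coherent sheaf is the direct sum of a torsion sheaf and a locally free sheaf, the latter splitting by the Birkhoff--Grothendieck theorem into line bundles $\mathcal{O}(n)$, and these decompositions are unique up to order (Krull--Schmidt). The conjugation constrains the line bundles: since it covers the \emph{fixed-point-free} antipodal involution, $\mathcal{O}(n)$ of even degree carries a conjugation of real type and may occur singly, whereas for odd $n$ every conjugation is of quaternionic type ($c^2=-\mathrm{Id}$), so odd line bundles occur only doubled, as $2\mathcal{O}(n)$; the indecomposable torsion $\s$-sheaves are those supported on a conjugate pair $\{q,-q\}$, classified as in \cite[Proposition~3.1]{Qui-QJM98}. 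The admissible range of degrees is then fixed by the construction of $\F$: as the kernel of a morphism of holomorphic bundles out of $E^{0,1}\cong 2k\,\mathcal{O}(-1)$, the sheaf $\mathcal{U}_-$ is a \emph{subbundle} of $E^{0,1}$, hence torsion-free with all summands of degree $\le-1$; dually $\mathcal{U}_+$ is the cokernel, so its locally free summands have degree $\ge1$ and it carries all of the torsion. Thus the only indecomposable $\s$-sheaves that can appear are $\mathcal{O}(-2k-2)$ and $2\mathcal{O}(-2k-1)$ (in $\mathcal{U}_-$), $\mathcal{O}(2k+2)$ and $2\mathcal{O}(2k+1)$ (in $\mathcal{U}_+$), and the torsion sheaves at conjugate pairs.

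I would then match these with the building blocks. Examples~\ref{exm:U_k} and~\ref{exm:V_k}, together with the explicit sheaves of their duals recorded there, realise $\mathcal{O}(2k+2)$, $\mathcal{O}(-2k-2)$, $2\mathcal{O}(2k+1)$ and $2\mathcal{O}(-2k-1)$, while Example~\ref{exm:W_kq} realises every indecomposable torsion $\s$-sheaf; moreover distinct building blocks have pairwise non-isomorphic sheaves. Uniqueness in the theorem is now immediate: by additivity $\F$ sends any product decomposition of $(U,E)$ to a decomposition of $\mathcal{U}$ into these indecomposables, so Krull--Schmidt on the sheaf side determines the multiset of factors from $(U,E)$ alone. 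Observe also that the dual of a pair realising a torsion sheaf again realises a torsion sheaf --- torsion being confined to $\mathcal{U}_+$ for every pair --- hence is again of the form $W_{k,q}$; this is why Example~\ref{exm:W_kq} needs no separate dual.

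The real content, and the step I expect to be the main obstacle, is \emph{existence}: lifting the sheaf-level splitting of $\mathcal{U}$ back to a product decomposition of the pair. Having realised each admissible indecomposable by a building block (essential surjectivity of $\F$ onto the admissible $\s$-sheaves), I would establish that $\F$ is an equivalence onto this additive subcategory --- equivalently, that it is full and faithful --- so that the direct-sum splitting of $\mathcal{U}$ is induced by an honest product splitting of $(U,E)$; one must check that morphisms, and not merely objects, transfer. Should full faithfulness prove delicate, the fallback is an inductive peeling argument: use the canonical sub- and quotient-pairs of $(U,E)$ --- the largest quaternionic subspace and the CR and co-CR reductions of Definition~\ref{defn:(co-)cr_q} --- to split off one indecomposable summand of $\mathcal{U}$ at a time and recurse, exactly paralleling the hypercomplex case treated in \cite{DlaRin}. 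Arranging such a splitting compatibly with the $\s$-structure at every stage is where the argument is hardest.
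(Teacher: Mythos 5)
Your overall architecture --- reduce via $\F$, classify the admissible indecomposable $\s$-sheaves, match them with Examples \ref{exm:U_k}, \ref{exm:V_k}, \ref{exm:W_kq} and their duals, and get uniqueness from Krull--Schmidt --- is exactly the paper's plan, and you correctly identify the lifting of the sheaf splitting back to a product splitting of $(U,E)$ as the crux. But your primary mechanism for closing that step fails: $\F$ is \emph{not} full and faithful on the relevant category, because it annihilates every morphism from a pair whose sheaf sits in positive degrees to one whose sheaf sits in negative degrees. Concretely, ${\rm id}_E$ is a nonzero morphism of pairs $(0,E)\to(E,E)$, yet $\F(0,E)\cong 2k\,\mathcal{O}(1)$ and $\F(E,E)=E^{0,1}\cong 2k\,\mathcal{O}(-1)$, and ${\rm Hom}\bigl(\mathcal{O}(1),\mathcal{O}(-1)\bigr)=0$; so no equivalence onto the additive subcategory of admissible sheaves can exist, and the splitting of $\mathcal{U}=\mathcal{U}_-\oplus\mathcal{U}_+$ cannot be transported back formally. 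The paper's substitute is a two-step restricted argument, which your ``inductive peeling'' fallback gestures at but does not execute: first, by the dual of \cite[Proposition 4.7]{fq}, $\mathcal{U}_-$ is the holomorphic vector bundle of a CR quaternionic vector space $(U_-,E_-)$, and the diagram of the proof of \cite[Theorem 4.8]{Qui-QJM98} (adapted to conjugations) produces an injective morphism $t:(U_-,E_-)\to(U,E)$ with $U_-=E_-\cap t^{-1}(U)$, whose cokernel $(U_+,E_+)$ has sheaf $\mathcal{U}_+$ --- the existence of this $t$ is precisely the hard point your proposal leaves open; second, for pairs with $\mathcal{U}=\mathcal{U}_+$ the paper proves an explicit reconstruction: the cohomology sequence of \eqref{e:E/U} gives $(E/U)^{\C}=H^0(Z,\mathcal{U})$ compatibly with the conjugations, and then the cohomology sequence of \eqref{e:U+} shows the inclusion $U\to E$ is determined by $\mathcal{U}$, so that on this subcategory a direct-sum decomposition of the sheaf does induce a product decomposition of the pair (and pins down uniqueness there as well).

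A secondary gap: your degree bound for $\mathcal{U}_+$ does not follow ``dually''. A cokernel of a morphism into a trivial bundle a priori only has locally free summands of degree $\geq 0$, and quotients of trivial bundles can perfectly well contain trivial summands; excluding degree $0$ requires an argument. The paper gets it by tensoring \eqref{e:U+} with the tautological line bundle and invoking \cite[Proposition 3.1]{Qui-QJM98}; equivalently, $\mathcal{U}$ is a quotient of $\mathcal{E}\cong 2k\,\mathcal{O}(1)$, a trivial direct summand of $\mathcal{U}$ would give a surjection $2k\,\mathcal{O}(1)\to\mathcal{O}$, and ${\rm Hom}\bigl(\mathcal{O}(1),\mathcal{O}\bigr)=0$. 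Your statements about the conjugation (even degrees occurring singly, odd degrees doubled, torsion supported at antipodal pairs) and your uniqueness bookkeeping are fine, since the building blocks do have pairwise non-isomorphic sheaves; it is only the existence half, and this degree bound, where the proposal as written would not go through.
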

\begin{proof} 
Let $\mathcal{U}$ be the sheaf of $(U,E)$\,. By the dual of \cite[Proposition 4.7]{fq}\,, we have that $\mathcal{U}_-$ is the holomorphic vector bundle
of a CR quaternionic vector space $(U_-,E_-)$\,. Furthermore, from the diagram of the proof of \cite[Theorem~4.8]{Qui-QJM98}
(adapted to the case of sheaves with conjugations) we obtain that there exists an injective morphism $t:(U_-,E_-)\to(U,E)$
which induces an injective linear map $E_-/U_-\to E/U$; equivalently, $U_-=E_-\cap t^{-1}(U)$\,. Therefore $t$ admits a
cokernel $(U_+,E_+)$ whose sheaf is, obviously, $\mathcal{U}_+$\,.\\
\indent
Thus, we may assume $\mathcal{U}=\mathcal{U}_+$ and, consequently, we have an exact sequence 
\begin{equation} \label{e:E/U}
0\longrightarrow E^{0,1}\longrightarrow Z\times(E/U)^{\C}\longrightarrow\mathcal{U}\longrightarrow 0\;. 
\end{equation} 
\indent 
Then the cohomology exact sequence of \eqref{e:E/U} gives a canonical isomorphism (which intertwines the conjugations) 
$(E/U)^{\C}=H^0(Z,\mathcal{U})$\,.\\ 
\indent 
Furthermore, the morphism $(0,E)\to(U,E)$ determines a surjective sheaf morphism 
$\mathcal{E}\to\mathcal{U}$ whose kernel is $Z\times U^{\C}$ (with the corresponding morphism to $\mathcal{E}$ given by the 
inclusion $Z\times U^{\C}\to Z\times E^{\C}$ followed by the projection $Z\times E^{\C}\to\mathcal{E}$). Thus, we, also, have 
\begin{equation} \label{e:U+} 
0\longrightarrow Z\times U^{\C}\longrightarrow\mathcal{E}\longrightarrow\mathcal{U}\longrightarrow 0\;. 
\end{equation} 
\indent 
The cohomology exact sequence of \eqref{e:U+}\,, together with the isomorphisms $(E/U)^{\C}=H^0(Z,\mathcal{U})$ and $E^{\C}=H^0(Z,\mathcal{E})$\,, 
show that the inclusion $U\to E$ is determined by $\mathcal{U}$.\\ 
\indent 
Now, tensorising \eqref{e:U+} with the tautological line bundle over $Z\,(=\C\!P^1)$ and by using \cite[Proposition 3.1]{Qui-QJM98} we deduce that 
in the Birkhoff--Grothendieck decomposition of $\mathcal{U}$ there are no trivial terms. Together with Examples \ref{exm:U_k}\,, \ref{exm:V_k}\,,  
and \ref{exm:W_kq}\,, this completes the proof. 
\end{proof}

\indent
Let $(U,E)$ be a pair formed of a quaternionic vector space and a real vector subspace.\\
\indent
Then $(U,E)$ is a \emph{torsion pair} if it corresponds to a torsion sheaf; equivalently, $(U,E)$ is a product of pairs
as in Example \ref{exm:W_kq}\,.\\
\indent
The pair $(U,E)$ is \emph{torsion free} if its sheaf is torsion free; equivalently, it is a holomorphic vector bundle.

\begin{cor} \label{cor:vq_second_main_cor}
{\rm (i)} Let $(U,E)$ and $(U',E')$ be pairs formed of a quaternionic vector space and a real vector subspace. Suppose that either
$(U,E)$ or $(U',E')$ are torsion free. Then $(U\times U',E\times E')$ doesn't depend of the particular isometry used to define $E\times E'$.\\
\indent
{\rm (ii)} Any pair $(U,E)$ formed of a quaternionic vector space and a real vector subspace decomposes uniquely as the product
of a torsion pair and (the pairs given by) a CR quaternionic vector space and a co-CR quaternionic vector space; moreover, the filtration
$(0,0)\subseteq(U_-,E_-)\subseteq(U_-,E_-)\times(U_t,E_t)\subseteq(U,E)$
is canonical, where $(U_t,E_t)$ is the torsion pair and $(U_-,E_-)$ is the CR quaternionic vector space.
\end{cor}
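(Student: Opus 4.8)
The whole argument rests on Theorem~\ref{thm:qv_main} and on the behaviour of $\F$ under products. First I would record the product formula for $\F$. If $E\times E'$ is formed with respect to an isometry $T\colon Z\to Z'$, then the admissible complex structure attached to $J\in Z$ is $\bigl(J,T(J)\bigr)$, so $(E\times E')^{0,1}$ is the direct sum of $E^{0,1}$ and the pullback $T^*\bigl((E')^{0,1}\bigr)$; since $(E\times E')/(U\times U')=(E/U)\times(E'/U')$, the morphism defining the sheaves splits and we obtain
\begin{equation*}
\F(U\times U',E\times E')=\F(U,E)\oplus T^*\F(U',E')
\end{equation*}
as sheaves with conjugation. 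I would also use the reconstruction already present in the proof of Theorem~\ref{thm:qv_main}: a pair is determined, up to isomorphism, by its sheaf together with its conjugation, since the Birkhoff--Grothendieck summands and the torsion of the latter are in bijection with the factors of Examples~\ref{exm:U_k}--\ref{exm:W_kq} and their duals.

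For part~(i), the point is that two isometries differ by a rotation $R\in{\rm SO}(3)$, which acts on $Z\,(=\C\!P^1)$ through ${\rm PSU}(2)\subseteq{\rm Aut}(\C\!P^1)$ and commutes with the antipodal map. Every torsion-free sheaf on $\C\!P^1$ is a sum of line bundles $\mathcal{O}(n)$ and is therefore fixed, together with its conjugation, by all of ${\rm Aut}(\C\!P^1)$; by the reconstruction above, a torsion-free pair consequently admits a quaternionic automorphism inducing any prescribed $R$ on its twistor sphere. As a change of $T$ amounts to rotating one factor relative to the other, I can absorb it by such an automorphism of whichever factor is torsion free, so $(U\times U',E\times E')$ is independent of $T$. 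The hypothesis is indispensable: by Example~\ref{exm:W_kq} a torsion sheaf is supported at an antipodal pair $\pm q$, which a generic $R$ genuinely displaces.

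For part~(ii), I would apply Theorem~\ref{thm:qv_main} and sort the factors by the type of their sheaf: the negative line bundles (duals of Examples~\ref{exm:U_k} and~\ref{exm:V_k}) assemble into a CR quaternionic vector space $(U_-,E_-)$ with sheaf $\mathcal{U}_-$, the torsion factors (Example~\ref{exm:W_kq}) into a torsion pair $(U_t,E_t)$, and the positive line bundles (Examples~\ref{exm:U_k} and~\ref{exm:V_k}) into a co-CR quaternionic vector space $(U_+,E_+)$; that these products are again CR, torsion, and co-CR follows from the displayed formula and Proposition~\ref{prop:assoc_sheaf_first_props}(ii). This gives the decomposition. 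Its uniqueness is immediate because the three types live in three \emph{canonical} pieces of $\mathcal{U}$: the kernel $\mathcal{U}_-$ (CR), the torsion subsheaf $\mathrm{Tors}(\mathcal{U}_+)$ of the cokernel (torsion), and the free quotient $\mathcal{U}_+/\mathrm{Tors}(\mathcal{U}_+)$ (co-CR); together with the uniqueness in Theorem~\ref{thm:qv_main} this fixes each factor up to isomorphism.

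It remains to see that the filtration is canonical as a chain of subobjects of $(U,E)$, and this I expect to be the main obstacle. I would give intrinsic, mutually dual descriptions. For $E_-$ I take the largest quaternionic subspace of $E$ on which $U$ induces a CR quaternionic structure; the sum of two such subspaces is again of this kind, so a largest one exists and is manifestly independent of any decomposition. That it coincides with the factor $E_-$ reduces to the claim that neither a torsion pair nor a co-CR quaternionic vector space contains a nonzero CR quaternionic sub-pair. This follows from Proposition~\ref{prop:assoc_sheaf_first_props}(i): an inclusion of pairs induces on the kernels $\mathcal{U}_-$ an \emph{injective} sheaf morphism, whereas for torsion and co-CR pairs one has $\mathcal{U}_-=0$, forcing the sub-pair to vanish. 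Dually, $E_-+E_t$ is the smallest quaternionic subspace $E_0$ for which the quotient pair $\bigl((U+E_0)/E_0,\,E/E_0\bigr)$ is co-CR quaternionic; the same argument applied to the canonical quotient $\mathcal{U}_+/\mathrm{Tors}(\mathcal{U}_+)$ identifies this $E_0$ with $E_-+E_t$. Both descriptions being choice-free, the filtration is canonical.
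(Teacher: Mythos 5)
Your proposal is correct and takes essentially the paper's own route: part (i) rests on exactly the paper's stated lemma that torsion-free sheaves endowed with a conjugation are invariant under pull-back through orientation-preserving isometries, combined with the reconstruction of a pair from its sheaf via Theorem \ref{thm:qv_main}, and part (ii) sorts the factors of Theorem \ref{thm:qv_main} according to the canonical pieces $\mathcal{U}_-$\,, $\mathrm{Tors}(\mathcal{U}_+)$ and $\mathcal{U}_+/\mathrm{Tors}(\mathcal{U}_+)$\,, which is precisely what the paper compresses into ``follows from (i) and the proof of Theorem \ref{thm:qv_main}''. Your additions (the explicit product formula for $\F$, the observation that torsion pairs genuinely obstruct (i) since their supports $\pm q$ are displaced by a generic rotation, and the intrinsic maximality description of $(U_-,E_-)$ ensuring the filtration is canonical as a chain of subobjects) merely make explicit what the paper leaves implicit.
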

\begin{proof}
If\/ $\mathcal{U}$ is a holomorphic vector bundle over $S^2$ and $T:S^2\to S^2$ is a holomorphic diffeomorphism then $T^{-1}(\mathcal{U})$
is isomorphic to $\mathcal{U}$ and, furthermore, the same holds for bundles, endowed with a conjugation covering the antipodal map,
and their pull-backs through orientation preserving isometries. Assertion (i) follows quickly.\\
\indent
Assertion (ii) follows from (i) and the proof of Theorem \ref{thm:qv_main}\,.
\end{proof}

\indent
Finally, note that the `augmented (strengthen) $\Hq$-modules' of \cite{Joy-QJM98} (\,\cite{Qui-QJM98}\,) are just pairs whose decompositions
contain no terms of the form $(\Hq,\Hq)$ (\,$(0,\Hq)$\,); equivalently, in the decompositions of their sheaves there are no terms
of Chern number $-1$ (\,$1$\,).


\begin{thebibliography}{10}


\bibitem{AleMar-Annali96}
D.~V.~Alekseevsky, S.~Marchiafava, Quaternionic structures on a manifold and subordinated structures,
\textit{Ann. Mat. Pura Appl.}, {\bf 171} (1996) 205--273. 

\bibitem{DlaRin} 
V.~Dlab, C.~M.~Ringel, Real subspaces of a quaternion vector space, 
\textit{Canad. J. Math.}, {\bf 30} (1978) 1228--1242.

\bibitem{GunRos}
R.~C.~Gunning, H.~Rossi, \textit{Analytic functions of several complex variables}, Prentice-Hall, Inc., Englewood Cliffs, N.J. 1965.

\bibitem{IMOP}
S.~Ianu\c s, S.~Marchiafava, L.~Ornea, R.~Pantilie, Twistorial maps between quaternionic manifolds,
\textit{Ann. Sc. Norm. Super. Pisa Cl. Sci. (5)}, {\bf IX} (2010) 47--67.

\bibitem{Joy-QJM98}
D.~Joyce, Hypercomplex algebraic geometry,
\textit{Q. J. Math.}, {\bf 49} (1998) 129--162.

\bibitem{fq}
S.~Marchiafava, L.~Ornea, R.~Pantilie, Twistor Theory for CR quaternionic manifolds and related structures,
\textit{Monatsh. Math.}, (in press). 

\bibitem{Qui-QJM98}
D.~Quillen, Quaternionic algebra and sheaves on the Riemann sphere,
\textit{Q. J. Math.}, {\bf 49} (1998) 163--198. 

\bibitem{Wid-QJM} 
D.~Widdows, Quaternionic algebra described by $\rm Sp(1)$ representations, 
\textit{Q. J. Math.}, {\bf 54} (2003) 463--481.

\end{thebibliography}
\end{document}